\theoremstyle{definition}
\newtheorem{de}{Definition}
\theoremstyle{plain}
\newtheorem{theo}[de]{Theorem}
\newtheorem{lemma}[de]{Lemma}
\theoremstyle{remark}
\newcommand{\R}{\mathbb{R}}
\newcommand{\Z}{\mathbb{Z}}
\newcommand{\p}{\mathbb{P}}
\newcommand{\E}{\mathbb{E}}
\newcommand{\F}{\mathcal{F}}
\newcommand{\G}{\mathcal{G}}
\newcommand{\1}{\mathds{1}}
\newcommand{\dd}{\mathrm{d}}
\begin{document}
	\setlength\parindent{0pt}
\centerline{{\Large Results on standard estimators in the Cox model}}

\vskip 5mm
\noindent C\'ecile Durot$^*$ and Eni Musta$^{**}$

\noindent $^*$Universit\'e Paris Nanterre, $^{**}$Katholieke Universiteit Leuven

\noindent $^*$cecile.durot@gmail.com, $^{**}$eni.musta@kuleuven.be

\vskip 3mm
\noindent Key Words: Cox regression model;
maximum partial likelihood estimator;
Breslow estimator;
uniformly bounded moments.
\vskip 3mm

\noindent ABSTRACT

We consider the Cox regression model and prove some properties of the maximum partial likelihood estimator $\hat\beta_n$ and of the 
the Breslow estimator $\Lambda_n$. The asymptotic properties of these estimators have been widely studied in the literature but  we are not aware of a reference where it is shown that they have uniformly bounded moments. These results are needed, for example, when studying global errors of shape restricted estimators of the baseline hazard function. 
\vskip 4mm

\noindent 1.   INTRODUCTION

	We consider the Cox proportional hazards model, which is commonly used to investigate the relationship between the survival times and the predictor variables  in the presence of right censoring. Let $X$ be the event time and $C$ the censoring time for a subject with covariate vector $Z$. We have $n$ i.i.d observations $(T_1,\Delta_1,Z_1),\dots,(T_n,\Delta_n,Z_n)$, where 
$T_i=\min(X_i,C_i)$ is the follow up time and  $\Delta_i=\1_{\{X_i\leq C_i\}}$ is censoring indicator.

The Cox regression model assumes that 
the hazard function at time $t$ for a subject with covariate vector $z\in\R^d$ has the form
\[
\lambda(t|z)=\lambda_0(t)\,\mathrm{e}^{\beta'_0z},\quad t\in\R^+,
\]
where $\lambda_0$ represents the baseline hazard function, corresponding to a subject with $z=0$, and $\beta_0\in\R^d$ is the vector of the regression coefficients. 

The following assumptions are common when studying asymptotics  in the Cox regression model (see  for example Tsiatis (1981), Lopuha\"a and Nane (2013a)). The variable $Z$ has distribution $F_Z(z)$. Given the covariate vector $Z,$ the event time $X$ and the censoring time $C$ are assumed to be independent. Furthermore, conditionally on $Z=z,$ the event time is a nonnegative r.v. with an absolutely continuous distribution function $F(x|z)$ and density $f(x|z).$ The censoring time is a nonnegative r.v. with distribution function $G(x|z)$ and the censoring mechanism is assumed to be non-informative, i.e. $F$ and $G$ share no parameters. We will also need the following assumptions:
\begin{itemize}
	\item[(A1)]
	the end points {$\tau_F$ and $\tau_G$} of the support of $F$ and $G$ satisfy
	\[
	\tau_G<\tau_F\leq\infty\qquad\text{ and }\qquad\p(T=\tau_G)>0,
	\]
	\item[(A2)]
	there exists $\epsilon>0$ such that
	\[
	\sup_{|\beta-\beta_0|\leq\epsilon}\E\left[|Z|^2\,\mathrm{e}^{2\beta'Z}\right]<\infty,
	\]
	\item[(A3)]
	for all $q\geq 1$, we have  $$\E\left[\mathrm{e}^{q\beta'_0Z}\right]<\infty, $$
	\item[(A4)] there exists $\epsilon>0$ such that,
	for all $q\geq 1$ and  $k=1,\dots,d$, we have  $$\sup_{|\beta-\beta_0|\leq\epsilon}\E\left[Z_k^{2q}\mathrm{e}^{q\beta'Z}\right]<\infty. $$
\end{itemize}

Here $|\ . \ |$ denotes  the Euclidean norm, $\beta'$ denotes the transpose of $\beta$ and $Z_k$ is the $k^{th}$ component of the vector $Z$. We will use the index $k=1,\dots,d$ when it corresponds to a component of a vector and indices $i,j=1,\dots,n$ when it corresponds to the different observations. 
The first assumption tells us that, at the end of the study, there is a positive chance that an individual will have survived without being censored. {This assumption is reasonable} in practice since most of the survival studies end at some prespecified time $T_0$ and $C_i=\min(\tilde{C_i},T_0)$, where $\tilde{C_i}$ denotes the censoring time for reasons not related to the end of the study and has support on $[0,\tau_{\tilde{C}}]$ such that $\tau_{\tilde{C}}\geq T_0$. In such a case $\tau_G=T_0$ and $\p(T=T_0)=\p(X\geq T_0,\tilde{C}\geq T_0)>0$ because at the end of the study there are usually subjects that have not experienced the event yet and were still being followed. All those subjects will be censored at time $T_0$.  This assumption is common also in asymptotic studies of the Cox model. In Tsiatis (1981),  it is assumed that the study is terminated at time $T_0$ and  $\p(T\geq T_0)>0$ which means that $\p(T= T_0)>0$. In Andersen and Gill (1982), observations in $[0,1]$ are considered and they assume that their function  $s^{(0)}$ is bounded away from zero on $[0,1]$ (see their condition D on page 1105). This happens only if $\p(T\geq 1)>0$ and in our case $1$ corresponds to $T_0$. 

Assumption  (A2) can be seen as conditions on the boundedness of the second moment of the covariates, for $\beta$ in a neighbourhood of $\beta_0$. The other two assumptions are additional ones needed for our analysis in order to get bounded moments of the maximum partial likelihood estimator $\hat\beta_n$ and of the Breslow estimator $\Lambda_n$ (see  definitions below). 

The proportional hazard property of the Cox model allows estimation of the effects $\beta_0$ of the covariates by the maximum partial likelihood estimator $\hat{\beta}_n$, while leaving the baseline hazard completely unspecified.  
$\hat{\beta}_n$ is defined as  the maximizer of the partial likelihood function
\[
L(\beta)
=
\prod_{i=1}^m 
\frac{\mathrm{e}^{\beta'Z_i}}{\sum_{j=1}^n
	\1_{\{T_j\geq X_{(i)}\}}\mathrm{e}^{\beta'Z_j}},
\]
where  $0<X_{(1)}<\cdots<X_{(m)}<\infty$ denote the ordered, observed event times  (see~ Cox (1972) and Cox (1975)). 
Asymptotic properties of this estimator have been investigated, among other papers,  in Tsiatis (1981), Andersen and Gill (1982). In particular, they show that 
\[
n^{1/2}(\hat\beta_n-\beta_0)\xrightarrow{d}N(0,\Sigma)
\]
for some positive definite matrix $\Sigma$. 
On the other hand,	the nonparametric cumulative baseline hazard
\[
\Lambda_0(t)=\int_0^t\lambda_0(u)\,\mathrm{d}u,
\]
is usually estimated by the Breslow estimator 
\begin{equation}
\label{eq:Breslow}
\Lambda_n(t)=\int \frac{\delta\1_{\{ u\leq t\}}}{\Phi_n(u;\hat{\beta}_n)}\,\mathrm{d}\p_n(u,\delta,z).
\end{equation}
where 
\begin{equation}
\label{eq:def Phin}
\Phi_n(t;\beta)=\int \1_{\{u\geq t\}} \mathrm{e}^{\beta'z}\,\mathrm{d}\p_n(u,\delta,z),
\end{equation}
and $\p_n$ is the empirical measure of the triplets $(T_i,\Delta_i,Z_i)$ with $i=1,\dots,n.$ 
$\Phi_n$ is an estimator of 
\begin{equation}
\label{eq:def Phi}
\Phi(t;\beta)=\int \1_{ \{u\geq t\}}\,\mathrm{e}^{\beta'z}\,\mathrm{d}\p(u,\delta,z),
\end{equation}
{where $\p$ is the common distribution of the triplets $(T_i,\Delta_i,Z_i)$}
and, in Lemma 4 of Lopuha\"a and Nane (2013a) it is shown that
\begin{equation}
\label{eqn:Phi}
\sup_{t\in\R}|\Phi_n(t;\beta_0)-\Phi(t;\beta_0)|=O_p(n^{-1/2}).
\end{equation} 
In the next section, we show that $n^{1/2}\vert\hat\beta_n-\beta_0\vert$,  $n^{1/2}\sup_t\vert\Phi_n(t;\beta_0)-\Phi(t;\beta_0)\vert$ and $n^{1/2}\sup_t\vert\Lambda _n(t)-\Lambda_0(t)\vert$ have uniformly bounded moments of any order. 
Such results are needed, for example, when studying global errors of  isotonic estimators of the baseline hazard (see Appendix D in Durot and Musta (2019)). That paper proposes a goodness of fit test for a parametric baseline distribution based on the $L_p$-distance between an isotonic estimator and a parametric estimator for the baseline hazard. Finding the limit distribution of such test statistic requires studying the asymptotic behaviour of these global errors.  The estimator depends on $\hat\beta_n$ and $\Phi_n$ and since the $L_p$-errors are considered, in the proof it is  needed that both $\hat\beta_n$ and $\Phi_n$ have bounded moments of any order. These results would be needed also for asymptotic study of the supremum distance of estimators of the baseline hazard, which  can be used to construct uniform confidence bands.

\vskip 3mm

\noindent 2. MAIN RESULTS

{We consider first the maximum partial likelihood estimator of the finite dimensional parameter.}

\begin{theo}\label{theo: MLE}
	\label{theo:beta}
	Assume that (A1), (A2), (A4) hold and that the baseline hazard $\lambda_0$ is continuous.
	Let $p\geq 1$. There exist an event $E_n$
	with $\p(E_n)\to 1$, and  $K>0$ such that 
	\begin{equation*}\label{eq: moments betan}
	\limsup_{n\to\infty}\E\left[\1_{E_n}n^{p/2}\vert\hat\beta_n-\beta_0\vert^p\right] \leq K.
	\end{equation*}
\end{theo}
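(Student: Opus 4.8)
The plan is to run the classical one-step expansion of the partial-likelihood score around $\beta_0$, but to make every estimate quantitative in $L_p$ rather than merely in probability. Write the normalized score and observed information as
\[
\psi_n(\beta)=\int \delta\Big[z-\frac{\partial_\beta\Phi_n(u;\beta)}{\Phi_n(u;\beta)}\Big]\,\dd\p_n(u,\delta,z),\qquad J_n(\beta)=\int \delta\Big[\frac{\partial^2_\beta\Phi_n(u;\beta)}{\Phi_n(u;\beta)}-\frac{\partial_\beta\Phi_n(u;\beta)}{\Phi_n(u;\beta)}\Big(\frac{\partial_\beta\Phi_n(u;\beta)}{\Phi_n(u;\beta)}\Big)'\Big]\,\dd\p_n,
\]
noting that $\partial_\beta\Phi_n$ and $\partial^2_\beta\Phi_n$ are just $\int\1_{\{u\ge t\}}z\,\mathrm{e}^{\beta'z}\dd\p_n$ and its matrix analogue, that $\partial_\beta\psi_n=-J_n$ with $J_n$ positive semidefinite, and that $\hat\beta_n$ solves $\psi_n(\hat\beta_n)=0$. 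By the fundamental theorem of calculus $\psi_n(\beta_0)=\bar J_n(\hat\beta_n-\beta_0)$ with $\bar J_n=\int_0^1 J_n(\beta_0+s(\hat\beta_n-\beta_0))\,\dd s$, so on any event where $\bar J_n$ is invertible, $|\hat\beta_n-\beta_0|\le \lambda_{\min}(\bar J_n)^{-1}|\psi_n(\beta_0)|$.

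I would take $\epsilon$ from (A2), (A4) and set
\[
E_n=\Big\{|\hat\beta_n-\beta_0|\le\epsilon\Big\}\cap\Big\{\inf_{u\le\tau_G}\Phi_n(u;\beta_0)\ge c_0\Big\}\cap\Big\{\inf_{|\beta-\beta_0|\le\epsilon}\lambda_{\min}(J_n(\beta))\ge c_1\Big\}
\]
for suitable constants $c_0,c_1>0$. Consistency of $\hat\beta_n$ (standard, as in Tsiatis (1981) and Andersen and Gill (1982)) gives $\p(|\hat\beta_n-\beta_0|\le\epsilon)\to1$. Because (A1) forces $\Phi(\cdot;\beta_0)$ to be bounded below by some $c_0'>0$ on $[0,\tau_G]$ — there is positive at-risk mass up to $\tau_G$, where $\p(T=\tau_G)>0$ — the uniform rate \eqref{eqn:Phi} gives $\p(\inf_{u\le\tau_G}\Phi_n\ge c_0'/2)\to1$; and uniform convergence of $J_n(\cdot)$ to the continuous, positive-definite limiting information over the $\epsilon$-ball (using (A4)) makes the last event have probability $\to1$. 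Thus $\p(E_n)\to1$, and on $E_n$ the segment $\beta_0+s(\hat\beta_n-\beta_0)$ stays in the $\epsilon$-ball, so $\lambda_{\min}(\bar J_n)\ge c_1$ and
\[
\1_{E_n}n^{p/2}|\hat\beta_n-\beta_0|^p\le c_1^{-p}\,\1_{E_n}\,n^{p/2}|\psi_n(\beta_0)|^p.
\]
It then remains to bound $\E[\1_{E_n}|\sqrt n\,\psi_n(\beta_0)|^p]$ uniformly in $n$, and keeping the indicator is convenient because it holds the denominator $\Phi_n(u;\beta_0)$ away from zero.

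For the score I would split $\sqrt n\,\psi_n(\beta_0)=n^{-1/2}\sum_i g(T_i,\Delta_i,Z_i)+R_n$, where $g(u,\delta,z)=\delta[z-\partial_\beta\Phi(u;\beta_0)/\Phi(u;\beta_0)]$ uses the population ratio and $R_n=\sqrt n\int\delta[\partial_\beta\Phi/\Phi-\partial_\beta\Phi_n/\Phi_n]\,\dd\p_n$. Since $\beta_0$ solves the population normal equation we have $\E g=0$, and (A4) makes $\E|g|^p<\infty$, so Rosenthal's (equivalently Marcinkiewicz--Zygmund) inequality bounds $\E|n^{-1/2}\sum_i g_i|^p$ uniformly in $n$. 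For $R_n$ I would expand the ratio difference as $(\partial_\beta\Phi_n-\partial_\beta\Phi)/\Phi_n-(\partial_\beta\Phi/\Phi)(\Phi_n-\Phi)/\Phi_n$, bound the denominators below by $c_0$ on $E_n$, and control $\sqrt n\sup_{u\le\tau_G}|\partial_\beta\Phi_n-\partial_\beta\Phi|$ and $\sqrt n\sup_{u\le\tau_G}|\Phi_n-\Phi|$ in $L_p$; these are $L_p$-strengthenings of \eqref{eqn:Phi} obtained from a maximal inequality for the $\p$-Donsker classes $\{u\mapsto\1_{\{u'\ge u\}}\mathrm{e}^{\beta_0'z}\}$ and $\{u\mapsto\1_{\{u'\ge u\}}z_k\mathrm{e}^{\beta_0'z}\}$, whose envelopes have all moments finite by (A3)--(A4). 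As $\delta\,\dd\p_n$ has total mass at most one and is supported on $[0,\tau_G]$ (an observed event needs $X\le C\le\tau_G$), this yields $\E[\1_{E_n}|R_n|^p]\le C$, and combining the two pieces closes the argument. The main obstacle is exactly this uniform $L_p$ control of $\sqrt n\,\psi_n(\beta_0)$: the score is not an i.i.d.\ sum because of the random ratio $\partial_\beta\Phi_n/\Phi_n$, so one must either carry out the empirical-process decomposition above or, equivalently, represent $\sqrt n\,\psi_n(\beta_0)$ as a stochastic integral against the counting-process martingales and apply the Burkholder--Davis--Gundy inequality to its predictable quadratic variation. In both routes the delicate point is keeping the at-risk denominator bounded below, which is precisely what (A1) secures, while (A4) supplies the finite moments of all orders that allow $p$ to be arbitrary.
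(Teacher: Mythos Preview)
Your skeleton is the same as the paper's: a one-step expansion $\psi_n(\beta_0)=\bar J_n(\hat\beta_n-\beta_0)$, an event $E_n$ on which the (random) information matrix is uniformly nonsingular so that $\sqrt n|\hat\beta_n-\beta_0|\lesssim|\sqrt n\,\psi_n(\beta_0)|$, and then a uniform $L_p$ bound on the normalized score. Where you diverge is in that last step. The paper represents $n^{-1/2}S'(\beta_0,t)$ as a sum of stochastic integrals against the counting-process martingales $M_i$, applies a Burkholder--Davis--Gundy-type bound to reduce to the predictable quadratic variation, and then disposes of the random ratio $(D_n^1)_k^2/\Phi_n$ in one stroke via Titu's lemma, obtaining $\sup_u (D_n^1(u;\beta_0))_k^2/\Phi_n(u;\beta_0)\le n^{-1}\sum_i (Z_i)_k^2 e^{\beta_0'Z_i}$ without ever needing $\Phi_n$ bounded below. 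This gives an \emph{unconditional} bound $\sup_n\E|n^{-1/2}S'(\beta_0)|^p<\infty$ under (A4) alone, so the paper's $E_n$ carries only the Hessian condition $\{\|n^{-1}\Sigma^{-1}S''(\beta^*)-I\|\le\epsilon\}$.

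Your empirical-process route---split $\sqrt n\,\psi_n(\beta_0)$ into an i.i.d.\ centered sum handled by Rosenthal/Marcinkiewicz--Zygmund plus a remainder $R_n$ controlled by $L_p$ maximal inequalities for $\sqrt n\sup|\Phi_n-\Phi|$ and $\sqrt n\sup|\partial_\beta\Phi_n-\partial_\beta\Phi|$---is also correct, but it forces you to enlarge $E_n$ with $\{\inf_u\Phi_n(u;\beta_0)\ge c_0\}$ to tame the denominators in $R_n$, and it implicitly calls on (A3) (the envelope $e^{\beta_0'z}$ needs all moments for the maximal inequality on the $\Phi_n$ class), whereas the theorem as stated assumes only (A1), (A2), (A4). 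In short: both arguments close, but the paper's martingale/Titu route is tidier here, yielding a sharper unconditional score bound and a smaller event $E_n$ under weaker hypotheses; your decomposition is more modular and reuses the Donsker-class machinery that the paper deploys elsewhere (Lemma~\ref{theo:moments_Phin} and the proof of Theorem~\ref{theo: breslow}). You even flag the martingale/BDG alternative yourself, which is exactly the route the paper takes.
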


The event $E_n$ in Theorem \ref{theo:beta} is an event with probability converging to one defined in an appropriate way in order to allow us to move from a result on convergence in probability for the matrix of second derivatives, {see \eqref{En} in the proofs,} to a result in terms of expectations. So  {$\hat\beta_n$} has bounded moments  only on such event. However, since its probability converges to one, the result is not restrictive. The same kind of reasoning is used also in other asymptotic studies of the Cox model, see Lopuha\"a and Musta (2018), where a similar event $E_n$ is used to prove the boundedness of the second moments of the isotonic estimators of the baseline hazard. These results are only intermediate results used to find the limiting distribution of certain estimators. Hence for the final convergence in distribution, restricting to an event $E_n$ with probability converging to one is not an issue. 

{Next, we consider the Breslow estimator. Again, the bound is obtained on an event with probability converging to one as $n\to\infty$.}

\begin{theo}\label{theo: breslow}
	Assume that (A1)-(A4) hold and that the baseline hazard $\lambda_0$ is continuous.
	Let $p\geq 1$. There exist an event $A_n$
	with $\p(A_n)\to 1$, and  $K>0$ such that 
	\begin{equation*}
	\label{eq: moments Lambdan}
	\limsup_{n\to\infty}\E\left[\1_{A_n}n^{p/2}\sup_{t\in[0,\tau_G)}\vert\Lambda_n(t)-\Lambda_0(t)\vert^p\right] \leq K.
	\end{equation*}
\end{theo}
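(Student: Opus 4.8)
The plan is to linearise $\Lambda_n-\Lambda_0$ around the true quantities and to bound the resulting pieces in $L^p$, using Theorem~\ref{theo:beta} to absorb the factors involving $\hat\beta_n-\beta_0$. I would start from the identity
\[
\Lambda_0(t)=\int\frac{\delta\,\1_{\{u\le t\}}}{\Phi(u;\beta_0)}\,\dd\p(u,\delta,z),\qquad t\in[0,\tau_G),
\]
which holds in the Cox model: writing $H^1(u)=\p(T\le u,\Delta=1)$, one checks that $\dd H^1(u)=\lambda_0(u)\Phi(u;\beta_0)\,\dd u$ (continuity of $\lambda_0$ guarantees that $H^1$ has this density), so that $\int \delta\1_{\{u\le t\}}\Phi(u;\beta_0)^{-1}\dd\p=\int_0^t\lambda_0(u)\,\dd u=\Lambda_0(t)$. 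Subtracting this from \eqref{eq:Breslow} yields $\Lambda_n(t)-\Lambda_0(t)=A_{1,n}(t)+A_{2,n}(t)$ with
\[
A_{1,n}(t)=\int \delta\,\1_{\{u\le t\}}\left(\frac{1}{\Phi_n(u;\hat\beta_n)}-\frac{1}{\Phi(u;\beta_0)}\right)\dd\p_n(u,\delta,z),
\]
\[
A_{2,n}(t)=\int \frac{\delta\,\1_{\{u\le t\}}}{\Phi(u;\beta_0)}\,\dd(\p_n-\p)(u,\delta,z),
\]
and it suffices to bound $\E[\1_{A_n}n^{p/2}\sup_t|A_{1,n}|^p]$ and $\E[n^{p/2}\sup_t|A_{2,n}|^p]$ separately.

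Next I would fix the event. Since $\Phi(\cdot;\beta_0)$ is non-increasing, assumption (A1) gives $\phi_0:=\Phi(\tau_G;\beta_0)>0$, hence $\Phi(u;\beta_0)\ge\phi_0$ for all $u<\tau_G$. I would take $A_n=E_n\cap B_n$, where $E_n$ is the event of Theorem~\ref{theo:beta} (intersected, if needed, with $\{|\hat\beta_n-\beta_0|\le\tilde\epsilon\}$ for a small radius $\tilde\epsilon$ fixed below, which is licit since $\hat\beta_n\to\beta_0$ in probability) and $B_n=\{\sup_{u<\tau_G}|\Phi_n(u;\hat\beta_n)-\Phi(u;\beta_0)|\le\phi_0/2\}$; then $\p(A_n)\to1$. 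On $B_n$ one has $\Phi_n(u;\hat\beta_n)\ge\phi_0/2$ for every $u<\tau_G$, which is the crucial uniform lower bound on the random denominator. Using this together with $\int\delta\1_{\{u\le t\}}\dd\p_n\le1$, on $A_n$,
\[
\sup_{t<\tau_G}|A_{1,n}(t)|\le\frac{2}{\phi_0^{2}}\,D_n,\qquad D_n:=\sup_{u<\tau_G}\bigl|\Phi_n(u;\hat\beta_n)-\Phi(u;\beta_0)\bigr|.
\]
I would then split $D_n\le \sup_u|\Phi_n(u;\beta_0)-\Phi(u;\beta_0)|+\sup_u|\Phi_n(u;\hat\beta_n)-\Phi_n(u;\beta_0)|$. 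The first supremum is handled by the $L^p$ strengthening of \eqref{eqn:Phi} (the second boundedness statement announced in the introduction), whose proof is a maximal inequality for the class $\{\,z\mapsto\1_{\{u\ge t\}}\mathrm{e}^{\beta_0'z}:t\in\R\,\}$, a monotone family with envelope $\mathrm{e}^{\beta_0'z}\in L^p$ for every $p$ by (A3). For the second supremum, a mean value expansion of $\beta\mapsto\mathrm{e}^{\beta'z}$ gives, on $A_n$, $\sup_u|\Phi_n(u;\hat\beta_n)-\Phi_n(u;\beta_0)|\le|\hat\beta_n-\beta_0|\,W_n$ with $W_n=n^{-1}\sum_i|Z_i|\sup_{|\beta-\beta_0|\le\tilde\epsilon}\mathrm{e}^{\beta'Z_i}$; Cauchy--Schwarz then gives $\E[\1_{A_n}n^{p/2}(|\hat\beta_n-\beta_0|W_n)^p]\le(\E[\1_{E_n}(n^{1/2}|\hat\beta_n-\beta_0|)^{2p}])^{1/2}(\E[W_n^{2p}])^{1/2}$, the first factor being bounded by Theorem~\ref{theo:beta} (applied with exponent $2p$) and the second by (A3)--(A4), provided $\tilde\epsilon$ is chosen small enough (e.g.\ $\tilde\epsilon=\epsilon/\sqrt d$) that the sign-expansion of $\mathrm{e}^{2p\tilde\epsilon|Z|}$ produces exponents of the form $2p\beta$ with $|\beta-\beta_0|\le\epsilon$, matching (A4).

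The term $A_{2,n}$ is a genuine empirical process and is the cleanest piece: on $[0,\tau_G)$ the integrand $\delta\1_{\{u\le t\}}/\Phi(u;\beta_0)$ is bounded by $1/\phi_0$ and the sets $\{u\le t\}$ form a VC class, so a maximal inequality for uniformly bounded VC-type classes yields $\E[n^{p/2}\sup_{t<\tau_G}|A_{2,n}(t)|^p]\le K_2$ for a constant $K_2$ depending only on $p$ and $\phi_0$. Collecting the bounds for $A_{1,n}$ and $A_{2,n}$ gives the claim.

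I expect the main obstacle to be the term $A_{1,n}$, and specifically the interplay between the random denominator $\Phi_n(\cdot;\hat\beta_n)$ and the requirement of a genuine moment bound uniform in $t$: one must design $A_n$ so as to bound $\Phi_n(\cdot;\hat\beta_n)$ below uniformly on $[0,\tau_G)$ while still having $\p(A_n)\to1$, and then upgrade the stochastic-order statement \eqref{eqn:Phi} to an $L^p$ bound via maximal inequalities rather than mere consistency. The accompanying bookkeeping---choosing the radius $\tilde\epsilon$ so that the exponential weights in $W_n$ fall under (A4), and keeping every estimate uniform in $t$---is routine but is exactly where assumptions (A3) and (A4) are essential.
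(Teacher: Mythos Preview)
Your proposal is correct and follows essentially the same route as the paper: the same integral representation of $\Lambda_0$, the same decomposition (the paper writes three terms where you write two and then re-split $A_{1,n}$, which is purely cosmetic), the same events to lower-bound the random denominator, Lemma~\ref{theo:moments_Phin} for the $\Phi_n-\Phi$ piece, the same empirical-process maximal inequality for $A_{2,n}$, and Theorem~\ref{theo:beta} combined with Cauchy--Schwarz for the $\hat\beta_n-\beta_0$ piece. The only minor technical difference is that the paper controls the $\beta$-derivative via $|D_n^1|/\Phi_n^2$ and Titu's inequality, whereas you use the crude bound $W_n=n^{-1}\sum_i|Z_i|\sup_{|\beta-\beta_0|\le\tilde\epsilon}\mathrm{e}^{\beta'Z_i}$; both work under (A4).
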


{We end up the section with a lemma that is used in the proof of Theorem \ref{theo: breslow} and that is of independent interest. Indeed, the result is used in Durot and Musta (2019) to study  global errors of  isotonic estimators of the baseline hazard.}
\begin{lemma}
	\label{theo:moments_Phin}
	Suppose that (A3) holds.
	Let $p\geq 1$. Then, there exists $K>0$ such that 
	\[
	\sup_{n\geq 1}\E\left[n^{p/2}\sup_{t\in\R}\vert\Phi_n(t;\beta_0)-\Phi(t;\beta_0)\vert^p\right] \leq K.
	\]
\end{lemma}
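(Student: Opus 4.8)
The plan is to read $n^{1/2}\big(\Phi_n(t;\beta_0)-\Phi(t;\beta_0)\big)=\sqrt n\int f_t\,\dd(\p_n-\p)$ as an empirical process indexed by $t\in\R$, where $f_t(u,\delta,z)=\1_{\{u\ge t\}}\mathrm{e}^{\beta_0'z}$, and to bound its $L^p(\p)$ norm uniformly in $n$. It suffices to treat $p\ge2$, the case $p\in[1,2)$ following from it by Jensen's inequality. The class $\F=\{f_t:t\in\R\}$ is VC-subgraph of a fixed index $V$ (the half-lines $\{u\ge t\}$ form a VC class of index one, and multiplying by the fixed function $\mathrm{e}^{\beta_0'z}$ preserves the subgraph structure), and it admits the envelope $F(u,\delta,z)=\mathrm{e}^{\beta_0'z}$, which by (A3) lies in $L^q(\p)$ for every $q\ge1$. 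The goal is therefore a moment bound of the form $\E\big[\sup_t|\sqrt n\int f_t\,\dd(\p_n-\p)|^p\big]=O(1)$ for a VC class with an unbounded but fully integrable envelope.

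First I would truncate the envelope. Fix $a\in(0,1/2)$ and set $b_n=n^{a}$. Let $\Phi_n^{\le},\Phi^{\le}$ and $\Phi_n^{>},\Phi^{>}$ denote the versions of \eqref{eq:def Phin} and \eqref{eq:def Phi} at $\beta=\beta_0$ in which the integrand carries the extra factor $\1_{\{\mathrm{e}^{\beta_0'z}\le b_n\}}$, respectively $\1_{\{\mathrm{e}^{\beta_0'z}> b_n\}}$, so that $\Phi_n-\Phi=(\Phi_n^{\le}-\Phi^{\le})+(\Phi_n^{>}-\Phi^{>})$. For the tail part, $t\mapsto\Phi_n^{>}(t)$ and $t\mapsto\Phi^{>}(t)$ are nonincreasing and nonnegative, and since $T\ge0$ their suprema are attained at $t\le0$; hence
\[
\sup_{t\in\R}\big|\Phi_n^{>}(t)-\Phi^{>}(t)\big|\le \p_n\big[F\1_{\{F>b_n\}}\big]+\mu_n,\qquad \mu_n:=\E\big[F\1_{\{F>b_n\}}\big],
\]
so that no supremum over $t$ survives. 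Using the elementary bound $\E[F\1_{\{F>b_n\}}]\le b_n^{-(q-1)}\E[F^{q}]$, valid for any $q\ge1$ by (A3), I would control $n^{p/2}\mu_n^{p}$ by choosing $q$ with $a(q-1)>1/2$, while Rosenthal's inequality applied to the centred average $\p_n[F\1_{\{F>b_n\}}]-\mu_n$, combined with $\E[F^{2}\1_{\{F>b_n\}}]\to0$, handles the stochastic part; both are $o(1)$ after multiplication by $n^{p/2}$.

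The main term is the truncated class $\F_n=\{f_t\1_{\{F\le b_n\}}:t\in\R\}$, which is uniformly bounded by $b_n$, still VC-subgraph of the same fixed index $V$, and whose envelope satisfies $\|F\1_{\{F\le b_n\}}\|_{L^2(\p)}\le(\E[\mathrm{e}^{2\beta_0'Z}])^{1/2}$ uniformly in $n$. Writing $S_n=\sup_{t\in\R}\big|\sqrt n\int f_t\1_{\{F\le b_n\}}\,\dd(\p_n-\p)\big|$, the uniform-entropy maximal inequality for VC classes gives $\E[S_n]\lesssim\sqrt V\,\|F\|_{L^2(\p)}=O(1)$, and Talagrand's concentration inequality for the supremum of a bounded empirical process upgrades this to
\[
\big\|S_n\big\|_{L^p(\p)}\lesssim \E[S_n]+\sqrt p\,\|F\|_{L^2(\p)}+\frac{p\,b_n}{\sqrt n}.
\]
Since $b_n/\sqrt n=n^{a-1/2}\to0$, every term on the right is $O(1)$, whence $n^{p/2}\E[\sup_t|\Phi_n^{\le}-\Phi^{\le}|^p]=\E[S_n^p]=O(1)$. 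Combining the two parts through $(x+y)^p\le2^{p-1}(x^p+y^p)$ yields the claim.

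The main obstacle is obtaining a genuinely bounded, rather than slowly growing, bound. A direct discretisation of $t$ is tempting, but the half-line oscillation forces of order $n^{1/2}$ grid points to make the approximation error $o(n^{-1/2})$, and a union bound over that many points loses a factor $(\log n)^{p/2}$; this is fatal, since the statement demands $O(1)$. The remedy is precisely to use the entropy-integral first-moment bound, which is $O(1)$ and not $O(\sqrt{\log n})$, and to pass to higher moments through an exponential concentration inequality rather than a union bound. The second delicate point is the bookkeeping of the truncation level: $b_n$ must grow ($a>0$) yet slowly enough ($a<1/2$) for the bounded part to concentrate, while the rapidly decaying truncated moments supplied by (A3) must absorb the factor $n^{p/2}$ on the tail part; checking that the VC index of $\F_n$ stays bounded in $n$ is what makes the two estimates fit together.
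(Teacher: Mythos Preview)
Your argument is correct, but it takes a substantially longer route than the paper.  The paper applies Theorem~2.14.1 of van der Vaart and Wellner directly, which already delivers the $L^p$ bound
\[
\Big\|\sup_{f\in\F}\big|\sqrt n(\p_n-\p)f\big|\Big\|_{L^p(\p)}\ \lesssim\ J(1,\F)\,\|F\|_{L^{2\vee p}(\p)}
\]
in one stroke; the envelope $F=e^{\beta_0'Z}$ need not be bounded, only in $L^{2\vee p}$, and (A3) supplies that.  The only remaining work is to show $J(1,\F)<\infty$, which the paper does by a change of measure that absorbs $e^{2\beta_0'z}$ into the reference measure and then invokes the monotone-class entropy bound (Theorem~2.7.5).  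Your VC-subgraph argument yields the same polynomial covering-number estimate, so on that point the two proofs agree.

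What you do differently is rebuild the passage from first to $p$th moments by hand: truncate the envelope at $b_n=n^{a}$, use Rosenthal and (A3) on the tail, and then feed the bounded class through a first-moment maximal inequality plus Talagrand's concentration.  This is sound, and it is essentially how inequalities of the van der Vaart--Wellner type are proved, but it is unnecessary once one has the $L^p$ maximal inequality off the shelf.  In particular, the worry you articulate about discretising $t$ and losing a $(\log n)^{p/2}$ factor is legitimate for a naive union-bound argument, but the entropy-integral inequality already sidesteps it; the truncation and concentration layers you add are correct but dispensable here.
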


\vskip 3mm

\noindent 3.  PROOFS

\begin{proof}[Proof of Lemma \ref{theo:moments_Phin}]
	By definition we have 
	\[
	n^{1/2}\sup_{t\in\R}\vert\Phi_n(t;\beta_0)-\Phi(t;\beta_0)\vert=\sup_{t\in\R}\left|\int \1_{ \{u\geq t\}}\,\mathrm{e}^{\beta'_0z}\,\mathrm{d}\sqrt{n}(\p_n-\p)(u,\delta,z)\right|.
	\]
	Let $\F$ be  the class of functions 
	\[
	f_t(u,z)=\1_{ \{u\geq t\}}\,\mathrm{e}^{\beta'_0z},\qquad t\in\R,
	\]
	with envelope function $F(u,z)=\mathrm{e}^{\beta'_0z}$.
	Then, we can write
	\[
	\E\left[n^{p/2}\sup_{t\in\R}\vert\Phi_n(t;\beta_0)-\Phi(t;\beta_0)\vert^p\right]\leq \E\left[\sup_{f\in\F}\left|\int f(u,z) \mathrm{d}\sqrt{n}(\p_n-\p)(u,\delta,z)\right|^p\right]
	\]
	From Theorem 2.14.1  in van der Vaart and Wellner (1996), it follows that 
	\[
	\E\left[\sup_{f\in\F}\left|\int f(u,z) \mathrm{d}\sqrt{n}(\p_n-\p)(u,\delta,z)\right|^p\right]^{1/p}\lesssim J(1,\F)\Vert F\Vert_{L_{2\vee p}(\p)},
	\]
	where
	\[
	J(1,\F)=\sup_{Q}\int_0^1\sqrt{1+\log N(\epsilon\Vert F\Vert_{L_2(Q)},\F,L_2(Q))}\,\dd\epsilon
	\]
	and the supremum is taken over all probability measures $Q$ such that $\Vert F\Vert_{L_2(Q)}>0$. 
	By Assumption (A3), $\Vert F\Vert_{L_{2\vee p}(\p)}<\infty$. Hence, it remains to show that  $J(1,\F)$ is bounded. 
	
	Let $Q$ be a probability measure on $\R\times\R^d$ such that $\Vert F\Vert_{L_2(Q)}>0$. Let $Q'$ be the probability measure on $\R$ defined by 
	\[
	Q'(S)=\frac{\int_{S\times\R^p}e^{2\beta'_0z}\,\dd Q(u,z)}{\int_{\R\times \R^p}e^{2\beta'_0z}\,\dd Q(u,z)}=\frac{\int_{S\times\R^p}e^{2\beta'_0z}\,\dd Q(u,z)}{\Vert F\Vert_{L_2(Q)}^2},\qquad S\subseteq \R.
	\]
	For a given $\epsilon>0$ select an $\epsilon$-net $g_1,\dots,g_N$ in the class $\G$ of monotone functions $g:\,\R\to[0,1]$ with respect to $L_{2}(Q')$. From Theorem 2.7.5  and in van der Vaart and Wellner (1996) and the relation between covering and bracketing numbers on page 84 of van der Vaart and Wellner (1996), we have 
	$
	N\lesssim 1/\epsilon
	$ and the constant in the inequality $\lesssim$ does not depend on $Q'$.  Next, we consider functions $f_i(u,z)=g_i(u)e^{\beta'_0z}$. Then $f_1,\dots f_N$ form an $\epsilon\Vert F\Vert_{L_2(Q)}$-net of the class $\F$ with respect to $L_2(Q)$. Indeed, for each $t\in\R$, let $i$ be such that $g(u)=\1_{\{u\geq t\}}$ belongs in the $\epsilon$-ball around $g_i$. Then
	\[
	\begin{split}
	\Vert f_t-f_i\Vert_{L_2(Q)}^2&=\int_{\R\times\R^p}\left(\1_{\{u\geq t\}}-{g_i}(u)\right)^2e^{2\beta'_0z}\,\dd Q(u,z)\\
	&=\Vert F\Vert_{L_2(Q)}^2\int_{\R}\left(\1_{\{u\geq t\}}-{g_i}(u)\right)^2\,\dd Q'(u)\\
	&=\Vert F\Vert_{L_2(Q)}^2\Vert g-g_i\Vert_{L_2(Q')}^2\\
	&\leq \epsilon^2\Vert F\Vert_{L_2(Q)}^2.
	\end{split}
	\]
	Therefore
	\[
	N(\epsilon\Vert F\Vert_{L_2(Q)},\F,L_2(Q))\leq \frac{K}{\epsilon}
	\]
	for some constant $K>0$ independent of $Q$. It follows that $J(1,\F)$ is bounded, which concludes the proof.
\end{proof}

\begin{proof}[Proof of Theorem \ref{theo: MLE}]
	Let $S(\beta)$ be the log partial likelihood function
	\[
	S(\beta)=\log L(\beta)=\sum_{i=1}^m \beta' Z_{(i)}-\sum_{i=1}^m \log \left(\sum_{j=1}^n\1_{\{T_j\geq X_{(i)}\}}e^{\beta'Z_j} \right)
	\]
	where $X_{(1)},\dots,X_{(m)}$ are the ordered observed event times. From Theorem 3.1 in Tsiatis (1981),  $\hat\beta_n$ is the solution of $S'(\beta)=0$, where $S'$ denotes the vector $\left(\frac{\partial S(\beta)}{\partial \beta_1},\dots,\frac{\partial S(\beta)}{\partial \beta_d}\right)$. Note that, in Tsiatis (1981) it is written that $\hat\beta_n$ is  the solution to the equation (3.2) but actually it is a zero of the expression in (3.2).
	By a Taylor expansion 
	we have
	\[
	S'(\hat{\beta}_n)=S'(\beta_0)-S''(\beta^*)\left(\hat\beta_n-\beta_0\right)=0,
	\]
	where $|\beta^*-\beta_0|\leq|\hat\beta_n-\beta_0|$ and the positive semi-definite matrix $S''$ is minus the matrix of the second derivatives $S''_{ij}{(\beta)}=-\frac{\partial^2 S(\beta)}{\partial \beta_j\partial \beta_i}$ . We also know that $\frac1nS''(\beta^*)$ converges in probability to a nonsingular matrix $\Sigma$, see {the second step of the proof of} Theorem 3.2 in Andersen and Gill (1982).  There $S''$ is denoted by $\mathcal{I}$. In this proof conditions A, B, D of  Andersen and Gill (1982) are used. In our setting A is satisfied because we are assuming a continuous hazard rate. For B note that their $S^{(0)}$, $S^{(1)}$ and $S^{(2)}$ correspond to our $\Phi_n$, 
	\begin{equation}
	\label{eqn:D^1}
	D_n^1(t;\beta)=\frac{\partial \Phi_n(t;\beta)}{\partial \beta}=\frac1n\sum_{i=1}^n \1_{\{T_i\geq t\}}Z_ie^{\beta'_0Z_i}.
	\end{equation}
	and
	\begin{equation}
	\label{eqn:D^2}
	D^2_n(t;\beta)=\frac{\partial^2 \Phi_n(t;\beta)}{\partial\beta^2}=\frac1n\sum_{i=1}^nZ_iZ'_i\1_{\{T_i\geq t\}}e^{\beta'Z_i}.
	\end{equation}
	which converge uniformly to $\Phi$, 
	\[
	D^1(t;\beta)=\frac{\partial \Phi(t;\beta)}{\partial \beta},\quad\text{ and }\quad D^2(t;\beta)=\frac{\partial^2 \Phi(t;\beta)}{\partial\beta^2}
	\]
	(See Lemma 3.1 in Lopuha\"a and Nane (2013b) for the first two; in the same way one can also deal with $D^2_n$). For condition D in Andersen and Gill (1982), the boundedness of $D^1$ and $D^2$ follows from our assumptions (A2) and (A4). They  consider observations in a compact  interval  $[0,1]$ such that $\p(T\geq 1)>0$, in order to have $\inf_{t\in[0,1]}\Phi(t)>0$.  Here we consider {observations} on $[0,\tau_G]$ with $\p(T=\tau_G)>0$, so $\inf_{t\in[0,\tau_G]}\Phi(t;\beta_0)=\Phi(\tau_G;\beta_0)>0$. If assumption (A1) was not satisfied, i.e. $\p(T=\tau_G)=0$, we would need to restrict our results on an interval $[0,M]$ with $M<\tau_G$ in order to have $\inf_{t\in[0,M]}\Phi(t;\beta_0)>0$.
	Hence 
	\[
	\sqrt{n}\left(\hat\beta_n-\beta_0\right)=\Sigma^{-1}n^{-1/2}S'(\beta_0)-\left(\Sigma^{-1}\frac1nS''(\beta^*)-I\right)\sqrt{n}\left(\hat\beta_n-\beta_0\right).
	\]
	It follows that 
	\[
	\begin{split}
	\sqrt{n}\left|\hat\beta_n-\beta_0\right|&\leq \left|\Sigma^{-1}n^{-1/2}S'(\beta_0)\right|+\left|\left(n^{-1}\Sigma^{-1}S''(\beta^*)-I\right)\sqrt{n}\left(\hat\beta_n-\beta_0\right) \right|\\
	&\leq \Vert \Sigma^{-1}\Vert\, |n^{-1/2}S'(\beta_0)|+\Vert n^{-1}\Sigma^{-1}S''(\beta^*)-I\Vert \, \sqrt{n}\left|\hat\beta_n-\beta_0\right|
	\end{split}
	\]
	where $|\cdot|$ is the Euclidean norm in $\R^d$ and $\Vert\cdot\Vert$ is the matrix norm induced by the Euclidean vector norm, i.e. 
	\[
	\Vert A\Vert =\sup_{x\in\R^d\setminus\{0\}}\frac{|Ax|}{|x|}=\sigma_{\max}(A)\leq \left(\sum_{i,j=1}^dA_{ij}^2\right),\qquad A\in \R^{d\times d}
	\]
	and $\sigma_{\max}(A)$ is the largest singular value of $A$.
	Let $\epsilon<1$. Since $n^{-1}S''(\beta^*)\to \Sigma$ in probability, we can take the event 
	\[
	E_{n}=\left\{\Vert n^{-1}\Sigma^{-1}S''(\beta^*)-I\Vert\leq \epsilon\right\}.
	\]
	Then, we have $\p(E_{n})\to 1$ and
	\begin{equation}\label{En}
	\1_{E_{n}}\sqrt{n}\left|\hat\beta_n-\beta_0\right|\leq\frac{1}{1-\epsilon}  \Vert \Sigma^{-1}\Vert\, |n^{-1/2}S'(\beta_0)|.
	\end{equation}
	It suffices to show that $\E\left[|n^{-1/2}S'(\beta_0)|^p\right]$ is uniformly bounded.

	By definition we have 
	\[
	S'(\beta_0)=\sum_{i=1}^{n}\Delta_iZ_i-\sum_{i=1}^n \Delta_i\frac{D_n^1(T_i;\beta_0)}{\Phi_n(T_i;\beta_0)}
	\]
	where $D_n^1(t;\beta)$ is defined as in \eqref{eqn:D^1}.
	We will follow the martingale approach of Kalbfleisch and Prentice (2002). For each $i=1,\dots,n$, let $N_i(t)=\Delta_i\1_{\{T_i\leq t\}}$ be the right-continuous counting process for the number of observed failures on $(0,t]$ and $Y_i(t)=\1_{\{T_i\geq t\}}$ be the at-risk process.
	{From (5.49) in Kalbfleisch and Prentice (2002), the compensator of $N_i(t)$ is}
	\[
	A_i(t)=\int_0^t Y_i(u)\lambda_0(u)e^{\beta'_0Z_i}\,\dd u,
	\]
	{and 
		$M_i(t)=N_i(t)-A_i(t)$}
	is a mean zero martingale with respect to the filtration $$\F_t=\{N_i(s),Y_i(s+),Z_i\,:\,i=1,\dots,n, \, s\in[0,t]\}$$ (see Kalbfleisch and Prentice (2002), page 173). 
	The score function $S'$ up to a certain time $t$ can be then written 
	\[
	\begin{split}
	S'(\beta_0,t)&=\sum_{i=1}^n\int_0^t\left[Z_i-\frac{D_n^1(u;\beta_0)}{\Phi_n(u;\beta_0)}\right]\,\dd N_i(u)\\
	&=\sum_{i=1}^n\int_0^t\left[Z_i-\frac{D_n^1(u;\beta_0)}{\Phi_n(u;\beta_0)}\right]\,\dd M_i(u)
	\end{split}
	\]
	(see equations (5.50) and (5.51) in Kalbfleisch and Prentice (2002)). {Note that we can replace $\dd N_i$ by $\dd M_i$ because} 
	\[
	\begin{split}
	&\sum_{i=1}^n\int_0^t\left[Z_i-\frac{D_n^1(u;\beta_0)}{\Phi_n(u;\beta_0)}\right]\,\dd A_i(u)\\
	&=\sum_{i=1}^n\int_0^t\left[Z_i-\frac{D_n^1(u;\beta_0)}{\Phi_n(u;\beta_0)}\right]Y_i(u)\lambda_0(u)e^{\beta'_0Z_i}\,\dd u\\
	&=\int_0^t\lambda_0(u)\sum_{i=1}^nZ_iY_i(u)e^{\beta'_0Z_i}\,\dd u-\int_0^t\frac{D_n^1(u;\beta_0)}{\Phi_n(u;\beta_0)}\lambda_0(u)\sum_{i=1}^nY_i(u)e^{\beta'_0Z_i}\,\dd u\\
	&=n\int_0^t\lambda_0(u)D^1_n(u;\beta_0)\,\dd u-n\int_0^t D_n^1(u;\beta_0)\lambda_0(u)\,\dd u=0.
	\end{split}
	\]
	Being a sum of stochastic integrals of predictable processes with respect to a martingale, $S'{(\beta_0,\ .\ )}$ is also an $\F_t$-martingale. Let $$G_{i,n}(u)=\left[Z_i-\frac{D_n^1(u;\beta_0)}{\Phi_n(u;\beta_0)}\right].$$Then
	\[
	n^{-1/2}S'(\beta_0,t)=n^{-1/2}\sum_{i=1}^n\int_0^tG_{i,n}(u)\,\dd M_i(u)
	\]
	is a martingale with predictable variation process
	\[
	\langle n^{-1/2}S'(\beta_0)\rangle_t=\int_0^t \frac1n\sum_{i=1}^n \left\{G_{i,n}(u)G_{i,n}'(u)Y_i(u)e^{\beta'_0Z_i} \right\}\lambda_0(u)\,\dd u
	\]
	(see proof of (5.58) in page 176 of Kalbfleisch and Prentice (2002)). We have 
	\[
	\begin{split}
	&\E\left[\left|n^{-1/2}S'(\beta_0,t)\right|^p\right]\\
	&=\E\left[n^{-p/2}\left(\sum_{i=1}^n\int_0^tG_{i,n}'(u)\,\dd M_i(u)\sum_{i=1}^n\int_0^tG_{i,n}(u)\,\dd M_i(u)\right)^{p/2}\right]\\
	&=\E\left[n^{-p/2}\left(\sum_{k=1}^d\left( \sum_{i=1}^n\int_0^t\left(G_{i,n}(u)\right)_k\,\dd M_i(u)\right)^2\right)^{p/2}\right]\\
	&\lesssim \sum_{k=1}^d\E\left[n^{-p/2}\left( \sum_{i=1}^n\int_0^t\left(G_{i,n}(u)\right)_k\,\dd M_i(u)\right)^{p}\right],
	\end{split}
	\]
	where again $G_{i,n}'(u)$ denotes the transpose of the vector $G_{i,n}(u)$ and $(G_{i,n}(u))_k$ denotes its $k^{th}$ component. For the first and the second equalities we have used the definition of the Euclidean norm of a vector in $\R^d$, while for the last inequality we use that for positive numbers $a_1,\dots,a_d$   and all $p$ we have $(a_1+\dots+a_d)^p\leq d^p(a_1^p+\dots a_d^p)$.  Each component 
	\[
	\sum_{i=1}^n\int_0^t\left(G_{i,n}(u)\right)_k\,\dd M_i(u)
	\]
	is a martingale with quadratic variation
	\[
	\left\langle \sum_{i=1}^n\int_0^t\left(G_{i,n}(u)\right)_k\,\dd M_i(u)\right\rangle=\sum_{i=1}^n\int_0^t\left(G_{i,n}(u)\right)_k^2Y_i(u)e^{\beta'_0Z_i}\lambda_0(u)\,\dd u.
	\]
	It follows from properties of stochastic integrals  that
	\[
	\begin{split}
	&\E\left[\left|n^{-1/2}S'(\beta_0,t)\right|^p\right]\\
	&\lesssim \sum_{k=1}^d\E\left[n^{-p/2}\left\langle \sum_{i=1}^n\int_0^t\left(G_{i,n}(u)\right)_k\,\dd M_i(u)\right\rangle^{p/2}\right]\\
	&\lesssim \sum_{k=1}^d\E\left[\left(\frac1n \sum_{i=1}^n\int_0^t\left(G_{i,n}(u)\right)_k^2Y_i(u)e^{\beta'_0Z_i}\lambda_0(u)\,\dd u\right)^{p/2}\right].
	\end{split}
	\]
	Note that 
	\[
	\begin{split}
	&\frac1n \sum_{i=1}^n\int_0^t\left(G_{i,n}(u)\right)_k^2Y_i(u)e^{\beta'_0Z_i}\lambda_0(u)\,\dd u\\
	&\lesssim \int_0^t \frac1n\sum_{i=1}^n(Z_i)_k^2Y_i(u)e^{\beta'_0Z_i}\,\dd u+\int_0^t\frac{(D^1_n(u;\beta_0))_k^2}{\Phi_n(u;\beta_0)^2}\frac1n\sum_{i=1}^nY_i(u)e^{\beta'_0Z_i}\,\dd u\\
	&\lesssim \int_0^t (D^2_n(u;\beta_0))_{kk}\,\dd u+\int_0^t\frac{(D^1_n(u;\beta_0))_k^2}{\Phi_n(u;\beta_0)}\,\dd u\\
	&\lesssim \sup_{u\in[0,t]}(D^2_n(u;\beta_0))_{kk}+\sup_{u\in[0,t]}\frac{(D^1_n(u;\beta_0))_k^2}{\Phi_n(u;\beta_0)},
	\end{split}
	\]
	where $D^2_n(u;\beta)$ is defined in \eqref{eqn:D^2}.
	Note that $S'(\beta_0)$ is equal to $S'(\beta_0,T_{(n)})$. Hence, in order to have $\E\left[\left|n^{-1/2}S'(\beta_0)\right|^p\right]$ uniformly bounded, it suffices to show that, for all $p\geq 1$, 
	\begin{equation}
	\label{eqn:expectations}
	\E\left[\sup_{u\in[0,T_{(n)}]}(D^2_n(u;\beta_0))_{kk}^p\right]\qquad \mbox{and}\qquad \E\left[\sup_{u\in[0,T_{(n)}]}\frac{(D^1_n(u;\beta_0))_k^{2p}}{\Phi_n(u;\beta_0)^p}\right]
	\end{equation}
	are uniformly bounded, where $T_{(n)}$ is the largest of the observations $T_1,\dots, T_n$. 
	
	By definition, we have
	\[
	\sup_{u\in[0,T_{(n)}]}(D^2_n(u;\beta_0))_{kk}\leq \frac{1}{n}\sum_{i=1}^n (Z_i)_{k}^2e^{\beta'_0Z_i}
	\]
	Also $1/\Phi_n$ is well defined up to $T_{(n)}$
	and, from Titu's lemma, 
	\begin{equation}
	\label{eqn:Titu}
	\sup_{u\in[0,T_{(n)}]}\frac{(D^1_n(u;\beta_0))_k^2}{\Phi_n(u;\beta_0)}\leq \frac{1}{n}\sum_{i=1}^n (Z_i)_{k}^2e^{\beta'_0Z_i} 
	\end{equation}
	Hence, {in order to show that the expectations in \eqref{eqn:expectations} are bounded, it suffices to show that 
		\[\E\left[\left(\frac{1}{n}\sum_{i=1}^n (Z_i)_{k}^2e^{\beta'_0Z_i}\right)^p\right]\]
		is bounded.}
	Let $J=\{a=(a_1,\dots,a_n)\in\Z^n,\,a_i\geq 0 \text{ for all }i=1,\dots,n, \, \sum_{i=1}^n a_i=p \}$. Then, using linearity of the expectation, independence of the {$Z_i$'s}, it follows that 
	\[
	\begin{split}
	&\E\left[\left( \frac{1}{n}\sum_{i=1}^n (Z_i)_{k}^2e^{\beta'_0Z_i} \right)^p\right]\\
	&=\frac{1}{n^p}\sum_{a\in J}\binom{p}{a_1,\dots,a_n}\E\left[\prod_{i=1}^n(Z_i)_{k}^{2a_i}e^{a_i\beta'_0Z_i}\right]\\
	&=\frac{1}{n^p}\sum_{a\in J}\binom{p}{a_1,\dots,a_n}\prod_{i=1}^n\E\left[Z_{k}^{2a_i}e^{a_i\beta'_0Z}\right],
	\end{split}
	\]
	where $\binom{p}{a_1,\dots,a_n}$ are the multinomial coefficients. Using iteratively that, for a positive random variable $Y$ and $a,b\geq 0$, we have {$\E[Y^{a+b}]-\E[Y^a]\E[Y^b]=Cov(Y^a,Y^b)\geq 0$}, we obtain
	\[
	\prod_{i=1}^n\E\left[Z_{k}^{2a_i}e^{a_i\beta'_0Z}\right]\leq \E\left[Z_{k}^{2\sum_{i=1}^na_i}e^{\sum_{i=1}^na_i\beta'_0Z}\right]=\E\left[Z_{k}^{2p}e^{p\beta'_0Z}\right]. 
	\]
	Therefore, since $\sum_{a\in J}\binom{p}{a_1,\dots,a_n}=n^p$, we have
	\[
	\E\left[\left( \frac{1}{n}\sum_{i=1}^n (Z_i)_{k}^2e^{\beta'_0Z_i} \right)^p\right]\leq \E\left[Z_{k}^{2p}e^{p\beta'_0Z}\right] \frac{1}{n^p}\sum_{a\in J}\binom{p}{a_1,\dots,a_n}=\E\left[Z_{k}^{2p}e^{p\beta'_0Z}\right].
	\]
	By assumption (A4) it follows that $\E\left[\left( \frac{1}{n}\sum_{i=1}^n (Z_i)_{k}^2e^{\beta'_0Z_i} \right)^p\right]$, and as a result also $\E\left[\left|n^{-1/2}S'(\beta_0)\right|^p\right] $, are uniformly bounded.
	This concludes the proof of the theorem.
\end{proof}

\begin{proof}[Proof of Theorem \ref{theo: breslow}]
	For $t\in[0,\tau_G) $ we can express the cumulative baseline hazard as 
	\[
	\Lambda_0(t)=\int \frac{\delta\1_{\{u\leq t \}}}{\Phi(u;\beta_0)}\,\dd \p(u,\delta,z)
	\]
	(see (10) in Lopuha\"a and Nane (2013a)). 
	Hence, by definition of $\Lambda_n$ and the triangular inequality,  we have
	\begin{equation}
	\label{eqn:decomposition}
	\begin{split}
	\vert\Lambda_n(t)-\Lambda_0(t)\vert&\leq \left|\int \frac{\delta\1_{\{u\leq t \}}}{\Phi(u;\beta_0)}\,\dd (\p_n-\p)(u,\delta,z) \right|\\
	&\quad+\left|\int \delta\1_{\{u\leq t \}}\left(\frac{1}{\Phi_n(u;\hat\beta_n)}-\frac{1}{\Phi_n(u;\beta_0)}\right)\,\dd \p_n(u,\delta,z) \right|\\
	&\quad+\left|\int \delta\1_{\{u\leq t \}}\left(\frac{1}{\Phi_n(u;\beta_0)}-\frac{1}{\Phi(u;\beta_0)}\right)\,\dd \p_n(u,\delta,z) \right|.
	\end{split}
	\end{equation}
	Hence, it suffices to show that there exists an event $A_n$ with $\p(A_n)\to 1$ and positive constants $K_1,$ $K_2$, $K_3$ such that 
	\begin{equation}
	\label{eqn:term1}
	\limsup_{n\to\infty}\E\left[n^{p/2}\sup_{t\in[0,\tau_G)}\left|\int \frac{\delta\1_{\{u\leq t \}}}{\Phi(u;\beta_0)}\,\dd (\p_n-\p)(u,\delta,z) \right|^p\right] \leq K_1,
	\end{equation} 
	\begin{equation}
	\label{eqn:term2}
	\limsup_{n\to\infty}\E\left[\1_{A_n}n^{p/2}\sup_{t\in[0,\tau_G)}\left|\int \delta\1_{\{u\leq t \}}\left(\frac{1}{\Phi_n(u;\beta_0)}-\frac{1}{\Phi(u;\beta_0)}\right)\,\dd \p_n(u,\delta,z) \right|^p\right] \leq K_2
	\end{equation} 
	and
	\begin{equation}
	\label{eqn:term3}
	\limsup_{n\to\infty}\E\left[\1_{A_n}n^{p/2}\sup_{t\in[0,\tau_G)}\left|\int \delta\1_{\{u\leq t \}}\left(\frac{1}{\Phi_n(u;\hat\beta_n)}-\frac{1}{\Phi_n(u;\beta_0)}\right)\,\dd \p_n(u,\delta,z) \right|^p\right]\leq K_3. 
	\end{equation} 
	For \eqref{eqn:term1}, consider the class  $\F$ of functions 
	\[
	f_t(u,\delta,z)=\frac{\delta\1_{\{u\leq t \}}}{\Phi(u;\beta_0)},\qquad t\in[0,\tau_G),
	\]
	with envelope function $F(u,\delta,z)=\frac{\delta\1_{\{u\leq \tau_G \}}}{\Phi(\tau_G;\beta_0)}$.
	Then, we can write
	\[
	\E\left[n^{p/2}\sup_{t\in[0,\tau_G)}\left|\int \frac{\delta\1_{\{u\leq t \}}}{\Phi(u;\beta_0)}\,\dd (\p_n-\p)(u,\delta,z) \right|^p\right]\leq \E\left[\sup_{f\in\F}\left|\int f(u,\delta,z) \mathrm{d}\sqrt{n}(\p_n-\p)(u,\delta,z)\right|^p\right]
	\]
	From Theorem 2.14.1  in van der Vaart and Wellner (1996), it follows that 
	\[
	\E\left[\sup_{f\in\F}\left|\int f(u,\delta,z) \mathrm{d}\sqrt{n}(\p_n-\p)(u,\delta,z)\right|^p\right]^{1/p}\lesssim J(1,\F)\Vert F\Vert_{L_{2\vee p}(\p)},
	\]
	where
	\[
	J(1,\F)=\sup_{Q}\int_0^1\sqrt{1+\log N(\epsilon\Vert F\Vert_{L_2(Q)},\F,L_2(Q))}\,\dd\epsilon
	\]
	and the supremum is taken over all probability measures $Q$ such that $\Vert F\Vert_{L_2(Q)}>0$. 
	By Assumption (A1), $\Phi(\tau_G;\beta_0)>0$, so $\Vert F\Vert_{L_{2\vee p}(\p)}<\infty$. Moreover, $f_t$ is a product of a bounded monotone function with an indicator function. 
	From Theorem 2.7.5  in van der Vaart and Wellner (1996) and the relation between covering and bracketing numbers on page 84 of van der Vaart and Wellner (1996), we have 
	$
	N(\epsilon\Vert F\Vert_{L_2(Q)},\F,L_2(Q))\lesssim 1/\epsilon\Vert F\Vert_{L_2(Q)}
	$ and the constant in the inequality $\lesssim$ does not depend on $Q$.  
	It follows that   $J(1,\F)$ is bounded. This concludes the proof of \eqref{eqn:term1}.

	For \eqref{eqn:term2}, by the mean value theorem, we have
	\[
	\begin{split}
	&\sup_{t\in[0,\tau_G)}\left|\int \delta\1_{\{u\leq t \}}\left(\frac{1}{\Phi_n(u;\beta_0)}-\frac{1}{\Phi(u;\beta_0)}\right)\,\dd \p_n(u,\delta,z) \right|\\
	&\leq \sup_{{u\in[0,\tau_G)}}\left|\frac{1}{\Phi_n(u;\beta_0)}-\frac{1}{\Phi(u;\beta_0)}\right|\\
	&\leq\frac{\sup_{t\in[0,\tau_G)}\left|\Phi_n(t;\beta_0)-\Phi(t;\beta_0)\right|}{\min\{\inf_{t\in[0,\tau_G)}\Phi_n(t;\beta_0)^2,\inf_{t\in[0,\tau_G)}\Phi(t;\beta_0)^2\}} 
	\end{split}.
	\]
	By assumption (A1), $\inf_{t\in[0,\tau_G)}\Phi(t;\beta_0)=\Phi(\tau_G;\beta_0)>0$. Define the event \[
	A^1_n=\left\{\left|\Phi_n(\tau_G;\beta_0)-\Phi(\tau_G;\beta_0)\right|\leq \frac{1}{2}\Phi(\tau_G;\beta_0)\right\}
	\]
	From \eqref{eqn:Phi} it follows that  $\p(A^1_n)\to 1$ as $n\to \infty$ and on $A^1_n$ we have 
	\[
	\frac{1}{\inf_{t\in[0,\tau_G)}\Phi_n(t;\beta_0)^2}=\frac{1}{\Phi_n(\tau_G;\beta_0)^2}\leq \frac{{4}}{\Phi(\tau_G;\beta_0)^2}<\infty.
	\]
	Hence, by Lemma \ref{theo:moments_Phin}, we obtain 
	\[
	\begin{split}
	&\limsup_{n\to\infty}\E\left[\1_{A_n^1}n^{p/2}\sup_{t\in[0,\tau_G)}\left|\int \delta\1_{\{u\leq t \}}\left(\frac{1}{\Phi_n(u;\beta_0)}-\frac{1}{\Phi(u;\beta_0)}\right)\,\dd \p_n(u,\delta,z) \right|^p\right]\\
	&\lesssim \limsup_{n\to\infty}\E\left[n^{p/2}\sup_{t\in[0,\tau_G)}\left|\Phi_n(t;\beta_0)-\Phi(t;\beta_0)\right|^p\right]\leq K_2.
	\end{split}
	\]

	Finally, for \eqref{eqn:term3}, we have
	\[
	\begin{split}
	&\sup_{t\in[0,\tau_G)}\left|\int \delta\1_{\{u\leq t \}}\left(\frac{1}{\Phi_n(u;\hat\beta_n)}-\frac{1}{\Phi_n(u;\beta_0)}\right)\,\dd \p_n(u,\delta,z) \right|\\
	&\leq \sup_{{u\in[0,\tau_G)}}\left|\frac{1}{\Phi_n(u;\hat\beta_n)}-\frac{1}{\Phi_n(u;\beta_0)}\right|\\
	&\leq\left|\hat\beta_n-\beta_0\right| \sup_{t\in[0,\tau_G)}\frac{\left|D_n^1(t;\beta^*)\right|}{\Phi_n(t;\beta^*)^2}
	\end{split}.
	\]
	for some $\beta^*$ such that $|\beta^*-\beta_0|\leq |\hat\beta_n-\beta_0|,$ {where $D_n^1$ is defined as in \eqref{eqn:D^1}.} 
	As in \eqref{eqn:Titu} we can write 
	\[
	\sup_{t\in[0,\tau_G)}\frac{D_n^1(t;\beta^*)}{\Phi_n(t;\beta^*)^2}\leq \left(\frac{1}{n}\sum_{i=1}^n Z_i^2e^{\beta'_0Z_i}\right)^{1/2}\frac{1}{\inf_{t\in[0,\tau_G)}\Phi_n(t;\beta^*)^{3/2}},
	\]
	where the previous inequality holds componentwise.
	{Similar to Lemma 4 in Lopuha\"a and Nane (2013a), it can be shown} that
	\[
	\sup_{t\in\R}\left|\Phi_n(t;\beta^*)-\Phi(t;\beta_0)\right|\to 0,\qquad\text{in probability}.
	\]
	Define the event $A_n^2=\left\{\sup_{t\in\R}\left|\Phi_n(t;\beta^*)-\Phi(t;\beta_0)\right|\leq \frac{1}{2}\Phi(\tau_G;\beta_0)\right\}$. Then, $\p(A^2_n)\to 1$ as $n\to \infty$ and on $A^2_n$ we have 
	\[
	\frac{1}{\inf_{t\in[0,\tau_G)}\Phi_n(t;\beta^*)^{3/2}}=\frac{1}{\Phi_n(\tau_G;\beta^*)^{3/2}}\leq \frac{{2^{3/2}}}{\Phi(\tau_G;\beta_0)^{3/2}}<\infty.
	\]
	Let $A^3_n=A^2_n\cap E_n$, where $E_n$ is the event in Theorem \ref{theo:beta}. We again have $\p({A^3_n})\to 1$ and by Cauchy-Schwartz inequality we obtain 
	\[
	\begin{split}
	&\limsup_{n\to\infty}\E\left[\1_{A_n^3}n^{p/2}\sup_{t\in[0,\tau_G)}\left|\int \delta\1_{\{u\leq t \}}\left(\frac{1}{\Phi_n(u;\hat\beta_n)}-\frac{1}{\Phi_n(u;\beta_0)}\right)\,\dd \p_n(u,\delta,z) \right|^p\right]\\
	&\lesssim\limsup_{n\to\infty}\E\left[\1_{E_n}n^{p/2}\left|\hat\beta_n-\beta_0\right|^p \left|\frac{1}{n}\sum_{i=1}^n Z_i^2e^{\beta'_0Z_i}\right|^{p/2}\right]\\
	&\leq \left(\limsup_{n\to\infty}\E\left[\1_{E_n}n^{p}\left|\hat\beta_n-\beta_0\right|^{2p}\right]\right)^{1/2}\left(\limsup_{n\to\infty}\E\left[ \left|\frac{1}{n}\sum_{i=1}^n {Z_i^2e^{(\beta^*)'Z_i}}\right|^{p}\right]\right)^{1/2}.
	\end{split}
	\]
	The first term on the right hand side is bounded because of Theorem \ref{eq: moments betan} while the second term is shown to be bounded in the proof of Theorem \ref{theo:beta} using assumption $(A4)$.
	To conclude, the statement of the theorem holds if we take $A_n=A^1_n\cap A^3_n$.
\end{proof}
\noindent BIBLIOGRAPHY
\vskip 3mm

\noindent Andersen, P. K. and Gill, R. D. (1982),
	Cox's regression model for counting processes: a large sample
		study. {\it Ann. Statist.}, {\bf 10.4},
	{1100--1120}.
\vskip 3mm
\noindent Cox, D. R. (1972), Regression models and life-tables.
	 {\it J. Roy. Statist. Soc. Ser. B},
	{\bf 34},
	{187--220}.
\vskip 3mm
\noindent Cox, D. R.(1975),
	Partial likelihood.
   {\it Biometrika},
{\bf 62.2},
{269--276}.
\vskip 3mm
\noindent Durot, C.  and Musta, E. (2019),
 On the $L_p$ error of the Grenander-type estimator in the Cox model.
 { https://arxiv.org/abs/1907.06933
}.

\vskip 3mm
\noindent Kalbfleisch, J. D. and Prentice, R. L. (2002),
The statistical analysis of failure time data.
{ \it Wiley Series in Probability and Statistics},
{Second edition},
{Wiley-Interscience [John Wiley \& Sons], Hoboken, NJ},
{xiv+439}.

\vskip 3mm
\noindent 	Lopuha{\"a}, H. P. and Musta, E. (2018), Smoothed isotonic estimators of a monotone baseline hazard in the Cox model.
{\it Scand. J. Stat.},
{\bf 45.3},
{753--791}.

\vskip 3mm
\noindent Lopuha{\"a},  H. P. and Nane,  G. F.   (2013a),
An Asymptotic Linear Representation for the Breslow Estimator.
 {\it Comm. Statist. Theory Methods},
{\bf 42.7},
{1314-1324}.
\vskip 3mm
\noindent  Lopuha{\"a}, H. P. and Nane, G. F. (2013b),
Shape constrained non-parametric estimators of the baseline
	distribution in {C}ox proportional hazards model.
{\it Scand. J. Stat.},
 {\bf 40.3},
{619--646}.
\vskip 3mm
\noindent Tsiatis, A. A. (1981),
A large sample study of {C}ox's regression model.
{\it Ann. Statist.},
{\bf 9.1},
{93--108}.
\vskip 3mm
\noindent van der Vaart, A. W. and Wellner, J. A. (1996),
Weak convergence and empirical processes.
{\it Springer Series in Statistics},
{Springer-Verlag, New York}, {with applications to statistics},
{xvi+508}.
\end{document}